\tikzset{>=stealth}
\pgfplotsset{compat=1.8}
\newcommand{\R}{\mathds{R}}
\newcommand{\N}{\mathds{N}}
\newcommand{\I}{\ensuremath{\mathcal{I}}}
\renewcommand{\O}{\ensuremath{\mathcal{O}}}
\renewcommand{\P}{\ensuremath{\mathcal{P}}\xspace}
\newcommand{\NP}{\ensuremath{\mathcal{NP}}\xspace}
\newcommand{\T}{^{\top}}
\newcommand{\define}{\coloneqq}
\newcommand{\ie}{i.e.,\xspace}
\newcommand{\eg}{e.g.,\xspace}
\newcommand{\tdtsp}{TDTSP\xspace}
\newcommand{\fatline}{\noalign{\vskip 1mm}\Xhline{2\arrayrulewidth}\noalign{\vskip 1mm}}
\DeclareMathOperator{\opt}{opt}
\DeclareMathOperator{\arr}{arr}
\newcommand{\arrtime}{\ensuremath \theta^{\arr}}
\theoremstyle{plain}% default
\newtheorem{thm}{Theorem}[section]            	           %
\newtheorem{lem}[thm]{Lemma}         			   % lem-Umgebung,
\theoremstyle{definition}
\newtheorem{exmp}[thm]{Example}
\newtheorem{rem}[thm]{Remark}
\theoremstyle{remark}
\renewcommand*\env@matrix[1][*\c@MaxMatrixCols c]{%
  \hskip -\arraycolsep
  \let\@ifnextchar\new@ifnextchar
  \array{#1}}
\definecolor{darkgreen}{rgb}{0.1,0.6,0.1}
\definecolor{darkblue}{rgb}{0.1,0.1,0.6}
\definecolor{darkred}{rgb}{0.9,0.1,0.1}
\definecolor{lgray}{gray}{0.75}
\definecolor{pink}{rgb}{1.0, 0.4, 0.7}
\newcommand{\refsec}[1]{Section~\ref{#1}\xspace}
\newcommand{\set}[1]{\{#1\}}
\definecolor{pink}{rgb}{1.0, 0.4, 0.7}
\renewcommand{\T}{\mathcal{T}}
\begin{document}
\title{Cuts, Primal Heuristics, and Learning to Branch for the Time-Dependent 
Traveling Salesman Problem}
\author{Christoph Hansknecht, Imke Joormann, Sebastian Stiller}

\maketitle

\begin{abstract}
\noindent
  We consider the time-dependent traveling salesman problem
  (\tdtsp), a generalization of the asymmetric traveling
  salesman problem (ATSP) to incorporate time-dependent cost
  functions. In our model, the costs of an arc can change arbitrarily over time
  (and do \emph{not} only dependent on the position in the 
tour). 
The TDTSP turns out to be structurally more difficult than the TSP. 
We prove it is NP-hard and APX-hard even if a generalized version of the 
triangle inequality is satisfied. In particular, we show that even the 
computation of one-trees becomes intractable in the case of time-dependent 
costs. 
  
  We derive two
  IP formulations of the \tdtsp based on time-expansion and propose
  different pricing algorithms to handle the significantly increased
  problem size. We introduce multiple families of cutting planes for
  the \tdtsp as well as different LP-based primal heuristics, a
  propagation method and a branching rule. We conduct computational
  experiments to evaluate the effectiveness of our approaches
  on randomly generated instances. 
  We are able to decrease the optimality gap remaining after one hour of 
computations to about six percent, compared to  a gap of more than forty 
percent obtained by an off-the-shelf IP solver.

  Finally, we carry out a first attempt to learn strong branching decisions for 
the TDTSP. At the current state, this method does not improve the 
running times.
\end{abstract}

\section{Introduction}
\label{section:introduction}

The traveling salesman problem (TSP) is among the best studied
combinatorial optimization problem (see \cite{TSPBook,TSPVariations}
for summaries). Considerable effort has been put into polyhedral
analysis of the problem, development of primal heuristics, and
implementation of branch-and-bound based code. Several generalizations
of the problem have been considered as well, such as the TSP with
time windows \cite{ATSPTimeWindow,SolvingATSPTimeWindow}, or
the class of vehicle routing problems (VRPs) \cite{VRPBook}.

The classical asymmetric TSP is based on the assumption that the
travel time $c_{ij}$ for an arc $(i, j)$ is constant throughout the
traversal of the graph by the optimum tour. While this assumption is
justified when it comes to travel times based on geometric distances,
travel times tend to vary over time in real-world instances (such as
road networks).

Some effort has been made in order to generalize the TSP with respect
to time-dependent travel times. The authors 
of~\cite{TimeDependentTSP,TDTSPTardiness} consider the problem of
minimizing the travel time of a tour where the travel time of an arc
$(i, j)$ depends on the position of $i$ in the tour. Thus, the travel
time of $(i, j)$ is a function $c_{ij}(k)$ ($k = 1, \ldots, n$). This
simplified time-dependent TSP (which we denote by STDTSP) has since
attracted some attention, specifically, the authors 
of~\cite{TimeDependentTSPPoly} conduct a polyhedral study and perform
computational experiments. The STDTSP is solved on a graph
which
consists of $n$ layers of vertices, a tour corresponds then to a
path containing exactly one representative of each vertex.
Note that the STDTSP is closely related to identical machine
scheduling, in particular $P || \sum w_j T_j$, which can
be solve in a similar fashion~\cite{ParallelMachines}.

We further generalize the concept of time-dependent travel times
to the case where the travel time of an arc $(u, v)$ is a
function $c_{uv} : \{0, \ldots, \theta^{\max}\} \to \N$. As a result,
the corresponding instances tend to be much larger than in the
case of position-dependent travel times and particular care has
to be taken in order to provide exact solutions within a reasonable
time. It is however possible to generalize many results from the
STDTSP to the real-time-dependent TSP (\tdtsp).
(see, \eg \refsec{section:complexity}).

\section{Preliminaries}
\label{section:preliminaries}
An instance $(D,c,\theta^{\max})$ of the \tdtsp consists of a complete
directed graph $D=(V, A)$ with $n$ vertices ($V=\{1, \ldots, n\}$), a
time-horizon $\theta^{\max}$, and time-dependent travel times
$c_{a}: \Theta \to \N$ for $a \in A$, where $\Theta \define \{0, \ldots,
\theta^{\max}\}$ is
a set of points in time.  The vertex $s\define 1$ is defined as the source
vertex.
For each sequence of arcs $(a_1, \ldots, a_k)$ with $a_k = (u_k, v_k)$ and
$v_k = u_{k + 1}$ we can recursively define an arrival time
\begin{equation}
  \arrtime(a_1, \ldots, a_k) \define
  \begin{cases}
    c_{u_1, v_1}(0), & \text{ if } k = 1, \\
    \arrtime(a_1, \ldots, a_{k - 1}) + c_{u_k, v_k}(\arrtime(a_1, \ldots, a_{k -
1})), & \text{ else}.\\
  \end{cases}
\end{equation}
The asymmetric \tdtsp asks for a tour $T = (a_1, \ldots, a_n)$ which
minimizes the arrival time $\arrtime(a_1, \ldots, a_n)$.

We will consider several special cases of travel time functions which
play an important role in time-dependent versions of combinatorial problems:
\begin{enumerate}
\item
  Several well-known results (\eg \cite{DoubleTree,Christofides})
  state that the symmetric version of
  the TSP can be approximated in case of \emph{metric} cost coefficients,
  i.e. cost coefficients satisfying the triangle inequality.
  The definition of the triangle inequality can be easily generalized
  to the time-dependent case. Formally, a set of travel time functions
  satisfies the \emph{time-dependent triangle inequality}
  iff for each $u, v, w \in V$, $\theta \in \Theta$ with
  $\theta + c_{uv}(\theta) \leq \theta^{\max}$, it holds that
  \begin{equation}
    \label{eq:time_dependent_triangle}
    \theta + c_{uw}(\theta) \leq
    \theta + c_{uv}(\theta) + c_{vw}(\theta + c_{uv}(\theta)).
  \end{equation}
\item
  Another property of time-dependent cost functions goes by the name
  of FIFO (first-in-first-out). A function $f : \N \to \N$ satisfies the
  FIFO-property iff
  \begin{equation}
    \theta + f(\theta) \leq \theta' + f(\theta') \quad \forall\,
    \theta, \theta' \in \N,\; \theta \leq \theta'.
  \end{equation}
  The FIFO property implies that is is never advisable to wait at a
  certain vertex to decrease the arrival time at a destination.
  If the FIFO property is satisfied for each time-dependent cost function,
  then shortest paths with respect to time-dependent costs can be computed
  efficiently using a variant of Dijkstra's
  algorithm~\cite{TimeDependentDijkstra,TimeDependentRoutePlanning}.
\end{enumerate}

\section{Complexity}
\label{section:complexity}
As a generalization of the well-known ATSP, the \tdtsp is $\NP$-hard
itself. What is more, there exists no $\alpha$-approximation for any
$\alpha \geq 1$ for the general ATSP~\cite{Bibel}. On the other hand,
approximation algorithms are known for the metric variant of the
ATSP. Unfortunately, such algorithms don't exist in the case of the
\tdtsp:
\begin{thm}
  \label{thm:inapx}
  There is no $\alpha$-approximation algorithm
  for any $\alpha > 1$ for the \tdtsp
  unless $\P = \NP$. This is the case even if the time-dependent
  triangle inequality is satisfied.
\end{thm}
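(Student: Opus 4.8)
The plan is to prove \refthm{thm:inapx} by a gap-producing reduction from the (directed) \textsc{Hamiltonian Cycle} problem, which is $\NP$-complete. Given a digraph $G=(V,E)$ on $n$ vertices, I would build a \tdtsp instance on the same vertex set whose optimum is small exactly when $G$ is Hamiltonian and blows up by a prescribed factor otherwise. Fixing any $\alpha>1$ and choosing a penalty $M>\alpha n$ that is polynomial in $n$, the two cases would be separated by a factor larger than $\alpha$, so an $\alpha$-approximation could decide Hamiltonicity in polynomial time, forcing $\P = \NP$. Since the argument is carried out for an arbitrary fixed $\alpha$, it rules out $\alpha$-approximations for every $\alpha>1$.

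For the construction I would use the time horizon as a hard \emph{deadline}: set $\theta^{\max}$ so that a tour following $n$ unit-cost arcs arrives exactly on time, and design the travel times so that arcs of $E$ cost $1$ while any ``illegal'' move (a non-edge, or any move made after falling behind schedule) costs on the order of $M$ and throws the salesman irrecoverably past the deadline. If $G$ has a Hamiltonian cycle $a_1,\dots,a_n$, then traversing it keeps the salesman on schedule and yields $\arrtime(a_1,\dots,a_n)=n$. If $G$ has no Hamiltonian cycle, then every tour must use at least one non-edge, and by construction this pushes the arrival time above $M>\alpha n$, producing the required gap.

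The main obstacle---and the part that distinguishes this from the textbook hardness of general ATSP---is to make the \emph{same} gap survive under the time-dependent triangle inequality \eqref{eq:time_dependent_triangle}. A naive ``edge $=1$, non-edge $=M$'' assignment violates it: whenever a non-edge $(u,w)$ can be shortcut by two cheap edges $(u,v),(v,w)\in E$, \eqref{eq:time_dependent_triangle} demands $c_{uw}(\theta)\le c_{uv}(\theta)+c_{vw}(\theta+c_{uv}(\theta))=2$, contradicting $c_{uw}=M$. I would attack this with two levers already present in \eqref{eq:time_dependent_triangle}. First, the inequality is only required when the \emph{first} detour leg stays within the horizon, $\theta+c_{uv}(\theta)\le\theta^{\max}$; making every expensive arc overshoot $\theta^{\max}$ therefore deactivates all constraints in which such an arc occurs as the first leg. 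Second, and crucially, I would encode the instance in a time-layered fashion so that every genuine detour consumes an extra unit of the (tight) schedule: the point is to arrange that an expensive direct arc is only ever queried at times at which every cheap two-hop alternative has already fallen off schedule, so that its second leg $(v,w)$ is itself expensive and the right-hand side of \eqref{eq:time_dependent_triangle} stays large. Showing that this layering can be realized with integer travel times $c_a:\Theta\to\N$ while keeping $\theta^{\max}$ polynomial is exactly where the technical work lies.

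Finally I would assemble the pieces: confirm that the on-schedule Hamiltonian tour is feasible with value $n$, that any tour touching a non-edge is pushed past $M$, and that \eqref{eq:time_dependent_triangle} holds pointwise for the constructed $c_a$. The reduction is polynomial for each fixed $\alpha$, so a hypothetical $\alpha$-approximation would separate the value-$n$ and value-larger-than-$\alpha n$ cases and thereby solve \textsc{Hamiltonian Cycle}, which yields $\P = \NP$ and completes the proof.
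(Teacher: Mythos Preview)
Your high-level plan---a gap reduction from \textsc{Hamiltonian Cycle} with a deadline at time $n$ and a penalty $M>\alpha n$ for falling behind---is exactly the paper's strategy. Where you diverge is in how the gap is created, and that is also where your proposal stalls.

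You want the gap to live at the \emph{static} level: edges cost $1$, non-edges cost $M$. As you correctly diagnose, this immediately breaks \eqref{eq:time_dependent_triangle}, and you then propose an unspecified ``time-layering'' so that expensive direct arcs are only ever queried at times at which the cheap detours have already become expensive. You concede that realizing this layering is ``exactly where the technical work lies,'' and indeed you have not shown how to do it; without that, the proof is incomplete.

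The paper sidesteps this difficulty entirely by \emph{not} encoding the gap statically. It sets $c_{uv}=1$ for $\{u,v\}\in E$ and $c_{uv}=2$ otherwise, so the static costs already satisfy the ordinary triangle inequality. The time-dependence is then a single global switch:
\[
c_{uv}(\theta)=
\begin{cases}
c_{uv},&\theta\le n,\\
\alpha n+1,&\theta>n,
\end{cases}
\qquad \theta^{\max}=\alpha n.
\]
A Hamiltonian cycle uses $n$ unit arcs and stays on schedule; any tour that touches a non-edge picks up a cost of $2$ somewhere, slips past time $n$ before completing, and is hit by the $\alpha n+1$ penalty on a later arc. The time-dependent triangle inequality holds for free: for $\theta\le n$ the static $\{1,2\}$ costs are metric, and for $\theta>n$ the very first leg already overshoots $\theta^{\max}$, so the constraint in \eqref{eq:time_dependent_triangle} is vacuous. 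In short, the trick you are missing is to make the on-schedule costs metric (values $1$ and $2$, not $1$ and $M$) and let the time-dependence, not the static costs, produce the $\alpha$-gap.
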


\begin{proof}
  Suppose there exists an $\alpha$-approximation algorithm $A$ for the \tdtsp
  for a fixed value of $\alpha \geq 1$. We show that algorithm
  $A$ could be used to solve the \emph{Hamiltonian cycle} problem on
  an undirected graph $G=(V, E)$. To this end, let $D=(V, A)$ be the
  bidirected complete graph with costs
  \begin{equation*}
    c_{uv} \define
    \begin{cases}
      1, & \text{ if } \{u, v\} \in E \\
      2, & \text{ otherwise}.
    \end{cases}
  \end{equation*}
  Note that $G$ is Hamiltonian iff $D$ contains a tour with costs of
  at most $n$.  Consider the time-expansion of $D$ given by
  $\theta^{\max} \define \alpha n$ and the following time-dependent
  cost functions (satisfying the time-dependent triangle inequality):
  \begin{equation*}
    c_{uv}(\theta) \define
    \begin{cases}
      c_{uv}, & \text{ if } \theta \leq n \\
      \alpha n + 1, & \text{ otherwise}.
    \end{cases}
  \end{equation*}
  We apply $A$ to the instance $(D, c, \theta^{\max})$. It the
  resulting tour $T$ has $\arrtime(T) \leq n$, it must correspond to
  Hamiltonian cycle in $G$. Otherwise we know that $\arrtime(T) >
  \alpha n$, since $T$ must contain at least one arc $(u, v)$
  such that $T$ arrives at $u$ at a time $\geq n$. Since $A$ is
  an $\alpha$-approximation, the optimal tour $T_{\opt}$ has
  $\arrtime(T_{\opt}) > n$ and $G$ is not Hamiltonian.

\end{proof}

\begin{rem}[Dynamic Programming]
  It is well-known that the (asymmetric) TSP can be solved by using
  a dynamic programming approach: Let $C(S, v)$ be the
  smallest cost of an $(s,v)$-path consisting of the vertices $S \subseteq V$
  with $s, v \in S$. Then $C(S, v)$ satisfies the following
  relations:
  \begin{equation}
    \begin{aligned}
      C(\{s, v\}, v) &= c_{sv} && \forall v \in V, v \neq s \\
      C(S, v) &= \min_{\substack{u \in S\\ u \neq s, v}}
      C(S \setminus \{v\}, u) + c_{uv} && \forall S \subseteq V, v \in S. \\
    \end{aligned}
  \end{equation}
  The cost of an optimal tour is then given by
  $\min_{v \neq s} C(V, v) + c(v, s)$ and can be computed
  in $\O(2^{n}\cdot n^{2})$.
  If a given \tdtsp instance satisfies the FIFO property, these
  relations can be generalized to incorporate time-dependent costs:
  \begin{equation}
    \begin{aligned}
      C(\{s, v\}, v) &= c_{sv}(0) && \forall v \in V, v \neq s \\
      C(S, v) &= \min_{\substack{u \in S\\ u \neq s, v}}
      C(S \setminus \{v\}, u) + c_{uv}(C(S \setminus \{v\}, u))
      && \forall S \subseteq V, v \in S. \\
    \end{aligned}
  \end{equation}
  Note that the complexity is the same as in the case of an ATSP.
  This is due to the fact that the FIFO property ensures that only the
  shortest path for fixed $S, v$ needs to considered for subsequent
  computations. Without the FIFO property it becomes necessary to
  consider an $(s, v)$-paths for each $\theta \in \T(v)$ during the
  computations.
\end{rem}

\subsection{Approximation for Special Cases}

While the \tdtsp problem is relatively hard by itself, some results regarding
approximations can be preserved in the case where the time-dependent cost
functions
are of low variance.

\begin{thm}
  Let $\lambda \geq 1$  such that for all $u, v \in V$, $\theta ,\theta' \in
\{0, \ldots, T\}$
  it holds that
  \begin{equation}
    c_{uv} (\theta) \leq \lambda c_{uv}(\theta').
  \end{equation}
  Then, any $\alpha$-approximation of the TSP yields a $(\alpha
\lambda)$-approximation of
  the \tdtsp.
\end{thm}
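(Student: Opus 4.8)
The plan is to reduce the \tdtsp to a single static (asymmetric) TSP instance whose cost coefficients approximate the time-dependent ones up to the factor $\lambda$, and then to push the $\alpha$-approximation guarantee through this reduction. Concretely, I would introduce static costs
\[
  \bar{c}_{uv} \define \min_{\theta} c_{uv}(\theta) \qquad \text{for every } (u,v) \in A,
\]
and write $\bar{c}(T) \define \sum_{k=1}^{n} \bar{c}_{a_k}$ for the static length of a tour $T = (a_1, \ldots, a_n)$. The approximation algorithm for the TSP is then run as a black box on the instance $(D, \bar{c})$; the only thing I need from it is a tour whose static length is within a factor $\alpha$ of the static optimum.

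The heart of the argument is a sandwich estimate comparing the static length $\bar{c}(T)$ with the time-dependent arrival time $\arrtime(T)$, valid for \emph{every} tour $T$. Writing $\arrtime(T) = \sum_{k=1}^{n} c_{a_k}(\theta_k)$, where $\theta_k$ is the departure time along $a_k$ determined by the recursion, I would bound each summand by
\[
  \bar{c}_{a_k} \leq c_{a_k}(\theta_k) \leq \lambda\, \bar{c}_{a_k}.
\]
The left inequality is immediate since $\bar{c}_{a_k}$ is the minimum over all times; the right inequality follows from the low-variance hypothesis applied with $\theta'$ equal to the minimizing time, giving $c_{a_k}(\theta_k) \leq \lambda\, c_{a_k}(\theta') = \lambda\, \bar{c}_{a_k}$. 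Summing over $k$ yields
\[
  \bar{c}(T) \leq \arrtime(T) \leq \lambda\, \bar{c}(T).
\]

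With the sandwich in hand, the guarantee follows by chaining inequalities. Let $T_A$ be the tour returned by the $\alpha$-approximation on $(D, \bar{c})$, let $T^{\star}$ be a static optimum for $\bar{c}$, and let $T_{\opt}$ be an optimal \tdtsp tour. Then
\[
  \arrtime(T_A) \leq \lambda\, \bar{c}(T_A) \leq \alpha\lambda\, \bar{c}(T^{\star}) \leq \alpha\lambda\, \bar{c}(T_{\opt}) \leq \alpha\lambda\, \arrtime(T_{\opt}),
\]
using in turn the sandwich upper bound on $T_A$, the $\alpha$-approximation guarantee, the static optimality of $T^{\star}$ (so $\bar{c}(T^{\star}) \leq \bar{c}(T_{\opt})$), and the sandwich lower bound on $T_{\opt}$. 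This is exactly the claimed $(\alpha\lambda)$-approximation.

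I do not expect a genuine obstacle: the argument is short and elementary. The only point demanding care is the bookkeeping of the two directions of the sandwich — the upper bound must be spent on the returned tour and the lower bound on the time-dependent optimum, and one must not conflate $T^{\star}$ with $T_{\opt}$. A minor technical caveat is that the departure times $\theta_k$ arising along any tour must lie within the time horizon for the hypothesis to apply, which holds for any tour respecting it.
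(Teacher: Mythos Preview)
Your proposal is correct and follows essentially the same approach as the paper: define static costs by $\bar c_{uv} = \min_\theta c_{uv}(\theta)$, establish the sandwich $\bar c(T) \le \arrtime(T) \le \lambda\,\bar c(T)$, and chain the approximation guarantee through the static optimum. Your write-up is in fact slightly more careful than the paper's, which writes the final comparison as an equality rather than the inequality $\bar c(T_{\opt}) \le \arrtime(T_{\opt})$ that actually holds.
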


\begin{proof}
  Let $c : A \to \N$ be defined as
  \begin{equation}
    c_{uv} \define \min_{\theta \in \{0, \ldots, T\}} c_{uv}(\theta).
  \end{equation}
  This implies that $c_{uv} \leq c_{uv}(\theta) \leq \lambda c_{uv}$ for all
$\theta$.
  Let $T_{\opt}$, $T_{h}$ be the optimal and $\alpha$-approximate tour with
  respect to the costs $c$ and $T_{\opt, t}$ be the optimal tour.  We have that
  \begin{equation}
    \begin{aligned}
      \arrtime(T_h) \leq \lambda \cdot c(T_h) & \leq (\alpha \lambda) \cdot
c(T_{\opt}) \\
      & \leq (\alpha \lambda) \cdot c(T_{\opt, t}) \\
      & = (\alpha \lambda) \cdot \arrtime(T_{\opt, t}) \\
    \end{aligned}
  \end{equation}
\end{proof}
Note that since the ATSP in general is inapproximable in general, further
assumptions,
such as a metric lower bound $c_{uv}$, are still necessary to obtain an
approximation.

\subsection{One-trees}
Relaxations play an important role in integer programming in general
and the TSP in particular. They provide lower bounds which can
be used to obtain quality guarantees for solutions. The prevalent
relaxation of combinatorial problems formulated as integer programs is
given by their LP relaxations. In several cases however, it is
possible to derive purely combinatorial relaxations.  In the case of
the \emph{symmetric} traveling salesman problem, a popular
combinatorial relaxation is given by \emph{one-trees}. A one-tree with
respect to a graph $G = (V, E)$ is given by a spanning tree of $G$
together with an edge adjacent to a distinguished source $1 \in V$.
Since every tour is a one-tree, the one-tree of minimum cost provides
a lower bound on the cost of a tour.

The computation of a one-tree in the static case involves the computation
of a minimum spanning tree (MST). This computation can be
performed
efficiently using Prim's algorithm. It is therefore natural to ask whether
this approach can be generalized to the time-dependent case.

Let $T=(V, F)$ be a spanning tree of the graph $G$. We direct the
edges in $T$ away from $1$. For each vertex $v \in V$, there exists a
unique $(1,v)$-path $P_v$ in $T$. Hence, there is a unique
arrival time $\theta^{\arr}(u)$ induced by $T$ for each $u \in V$. The total
cost of the edges in $T$ is then given by
\begin{equation}
  c(T) \define \sum_{(u, v) \in F} c_{u, v}(\arrtime(u)).
\end{equation}
A \emph{time-dependent minimum spanning tree (TDMST)} minimizes
$c(T)$. Unfortunately the computation of a TDMST is hard:
\begin{thm}
  \label{thm:tdmst_hardness}
  There is no $\alpha$-approximation algorithm for any $\alpha > 1$ for
  the TDMST problem unless $P = \NP$.
\end{thm}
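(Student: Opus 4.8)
The plan is to mirror the gap construction used for \refthm{thm:inapx}, but to reduce from the \emph{Hamiltonian $s$--$t$ path} problem (NP-complete even with both endpoints fixed) and to exploit the observation that a spanning tree rooted at $s=1$ \emph{is} a Hamiltonian $s$--$t$ path precisely when its deepest vertex is $t$ at depth $n-1$. Given an undirected instance $G=(V,E)$ with source $s=1$ and target $t$, I would work on the complete graph on $V$, fix a large horizon $\theta^{\max}$, and introduce a huge penalty $M \define \lceil \alpha(n-1)\rceil + 1$. The time-dependent costs are chosen so that only the canonical choices are cheap: for $v \neq t$ set $c_{uv}(\theta)=1$ whenever $\{u,v\}\in E$ and $c_{uv}(\theta)=M$ otherwise; for the edges entering $t$ set $c_{ut}(\theta)=1$ exactly when $\{u,t\}\in E$ and $\theta = n-2$, and $c_{ut}(\theta)=M$ in every other case. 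These functions are deliberately non-FIFO, consistent with the earlier remark that FIFO costs render the problem tractable.

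First I would verify the easy direction: if $G$ has a Hamiltonian $s$--$t$ path, then that path, rooted at $s$, is a spanning tree using only in-$E$ edges, each evaluated at an arrival time equal to its tail's depth; the parent of $t$ sits at depth $n-2$, so the edge into $t$ also costs $1$, and the total is $c(T)=n-1$. Hence a yes-instance satisfies $\opt \le n-1$.

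The core of the argument is the converse forcing step, which I expect to be the main obstacle. I must show that in a no-instance \emph{every} spanning tree costs at least $M$. Suppose a tree $T$ has $c(T) < M$. Then $T$ uses no $M$-priced edge, so every edge not entering $t$ lies in $E$ and costs $1$, whence $\arrtime(u)$ equals the depth of $u$ for each vertex; moreover the edge entering $t$ must be the unique cheap option, namely an $E$-edge whose tail has depth $n-2$, which forces $t$ to depth $n-1$. A vertex at depth $n-1$ in an $n$-vertex tree pins down a root-to-$t$ path through all $n$ vertices, so $T$ must coincide with this path and is therefore a Hamiltonian $s$--$t$ path in $G$ — contradicting the no-instance assumption. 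Consequently $\opt \ge M > \alpha(n-1)$ in the no case. Ruling out the ``bushy'' trees and the wrong-time or non-edge arrivals at $t$ in this one claim is where all the care goes; everything else is bookkeeping on arrival times.

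Finally I would assemble the gap argument exactly as in \refthm{thm:inapx}: since $\alpha$, and hence $M$ and $\theta^{\max}$, are fixed polynomials in $n$, the instance has polynomial size and is built in polynomial time. An $\alpha$-approximation then returns a value $\le \alpha(n-1)$ on yes-instances and $\ge M > \alpha(n-1)$ on no-instances, so thresholding the output at $\alpha(n-1)$ decides Hamiltonian $s$--$t$ path in polynomial time, giving $\P = \NP$. The only step I would write out in full is the forcing claim that a sub-$M$ tree is necessarily a Hamiltonian path.
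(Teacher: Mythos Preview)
Your reduction from Hamiltonian $s$--$t$ path is correct. The forcing claim goes through exactly as you outline: a tree of cost below $M$ can use only unit-cost edges, so arrival times equal depths; the single cheap arc into $t$ then pins $t$ to depth $n-1$, and in an $n$-vertex rooted tree this forces the root-to-$t$ path to exhaust $V$, so the tree is that path and lies in $E$.

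The paper takes a rather different route: it reduces from \textsc{3Sat} using two families of gadgets. Each variable $x_i$ receives a small component in which any spanning tree reaches the vertex $x_i$ at time $1$ or $2$ depending on a local routing choice (encoding a truth assignment); each clause $Z_j$ receives a component that can be spanned cheaply only if at least one adjacent literal vertex arrives by time $2$, and otherwise forces an edge of cost $M$. Your argument is more elementary---a single depth gate on $t$ replaces the interacting variable/clause gadgets---and produces the same gap. What the paper's construction buys in exchange is that the underlying graph is sparse and almost all edge costs are static: only a handful of edges in each clause gadget are genuinely time-dependent, whereas in your instance every arc into $t$ has a sharply non-monotone cost. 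Both constructions are non-FIFO, which is consistent with the earlier dynamic-programming remark.
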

\begin{proof}
  Consider an instance of the \textsc{3Sat} problem.
  Let $X \define \{x_1, \ldots, x_n\}$ be a set of $n$~variables and
  $\mathcal{Z} \define \{Z_1, \ldots, Z_m\}$ be a set of $m$ clauses, where
  each clause contains at most three literals.
  We construct a suitable instance of the TDMST problem
  using a number of components. First we define a component $A_i$ for each literal
  $x_i \in X$. The component is shown in Figure~\ref{pic:mst_proof_variable_component},
  the edges are annotated with their (static) travel times.
  Any spanning tree will arrive at $x_i$ either at time 1 or
  time $2$ depending on whether the resulting path leads past $s_i$ or not.
  \begin{figure}
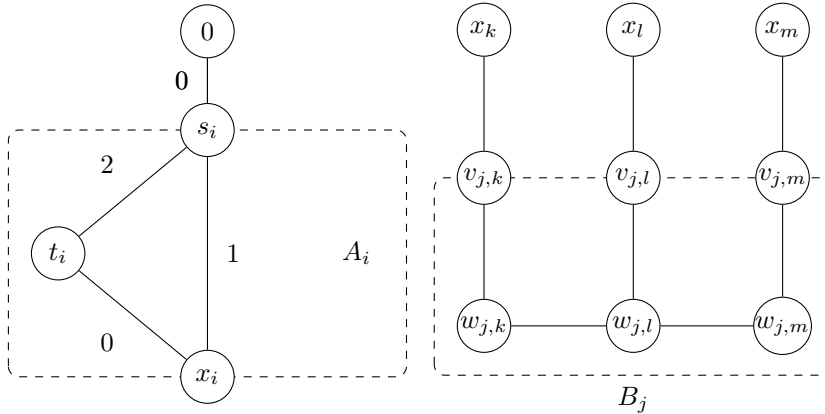

    \begin{center}
      \begin{subfigure}[b]{0.4\textwidth}
        \includegraphics[width=\textwidth]{TreeGadgetVariable.tikz}
        \caption{A component which queries whether a variable is set}
        \label{pic:mst_proof_variable_component}
      \end{subfigure}
      ~
      \begin{subfigure}[b]{0.4\textwidth}
        \includegraphics[width=\textwidth]{TreeGadgetClause.tikz}
        \caption{A component which determines whether a clause is satisfied}
        \label{pic:mst_proof_clause_component}
      \end{subfigure}
      \caption{Gadgets used in the proof of Theorem~\ref{thm:tdmst_hardness}}
    \end{center}
  \end{figure}
  Next we define a component $B_j$ for each clause $Z_j \in \mathcal{Z}$.
  Let $x_k$, $x_l$, $x_m$ be the literals which appear in $Z_j$. The edges
  in the component have the following travel times:
  \begin{enumerate}
  \item
    The edges between the vertices $w_{j, k}$, $w_{j, l}$ and $w_{j, m}$ have
    a constant travel time of 1.
  \item
    The edge connecting $v_{j, k}$ and $w_{j, k}$ has a travel time of $1$
    for times at most two, and $M \geq 1$ otherwise. The same holds
    true for the two other respective edges.
  \item
    The travel time of the edge connecting $x_k$ and $v_{j, k}$ depends
    on whether $x_k$ or $\overline{x}_k$ appears in the clause $Z_j$.
    In the former case the travel time is always 1, whereas in the latter
    it is given by
    \begin{equation}
      c_{x_k v_{j, k}}(\theta) \define
      \begin{cases}
        0, & \text{ if } \theta \geq 2 \\
        2, & \text{ otherwise}\\
      \end{cases}
    \end{equation}
  \end{enumerate}
  The instance including the components is depicted in
  Figure~\ref{pic:mst_proof_complete}.  Consider a satisfying truth
  assignment. For every literal $x_i$ set to \emph{true} we choose the
  path $0$, $s_i$, $x_i$, $t_i$ in component $A_i$. If the literal is
  set to \emph{false} we choose the path $0$, $s_i$, $t_i$, $x_i$.
  Thus, the arrival time at $x_i$ is $1$ if $x_i$ is set to
  \emph{true} and $2$ otherwise.  For each clause $Z_j$ we add the
  edges between the vertices corresponding to its literals and their
  respective $v_j$ counterparts. Since the clause is satisfied, the
  arrival time at at least one $v_j$ is $2$. Thus, the remaining part
  of $B_j$ can be spanned using three additional edges of cost $1$
  each.  The resulting tree hast costs of at most $2n + 6m$.
  Conversely, consider a tree with costs less than $M$. We first make
  the observation that any $x_i$ is connected by a path leading past
  $A_i$ to $0$. Otherwise, the path from $0$ to $x_i$ would lead past
  the component $B_j$ corresponding to some clause $Z_j$ containing
  $x_i$ or $\overline{x}_i$.  In this case however, it would not be
  possible to reach vertex $w_{j, i}$ before time $2$ and the cost of
  the tree would increase beyond $M$. Thus, any such tree corresponds
  to an assignment of variables. Every component $B_j$ is connected by
  an edge with cost of $1$. Therefore the assignment is also
  satisfying.  Assume there was an $\alpha$-approximation for the
  TDMST problem. We let $M \define \alpha (2n+6m) + 1$ and run the
  approximation. If the resulting tree has costs less than $M$, the
  \textsc{3Sat} instance is satisfiable.  Otherwise, the optimal TDMST
  has costs at least $M / \alpha > 2n + 6m$, \ie the instance is not
  satisfiable.

  \begin{figure}
    \centering
    \includegraphics[width=0.6\textwidth]{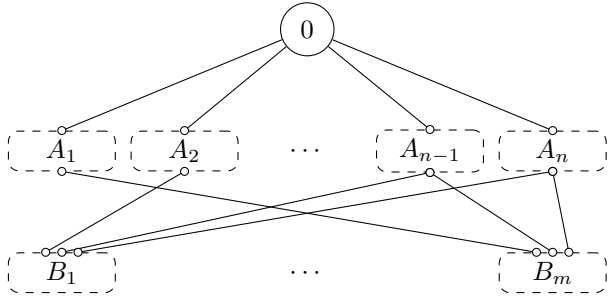}
    \caption{The TDMST construction used to prove Theorem~\ref{thm:tdmst_hardness}}
    \label{pic:mst_proof_complete}
  \end{figure}

\end{proof}

\section{Formulations}
\label{section:formulations}

\subsection{Time-expanded graphs}

In the following, we will consider formulations based on time-expanded graphs.
In order to introcude time-expanded graphs we first define a set of
reachable points in time. We let $\T : V \to 2^{\Theta}$,
\begin{equation}
  \begin{aligned}
    \T(v) \define \{ \theta \in \Theta \mid \: &
    \exists (a_1, \ldots, a_k), a_1 = (s,v_1), a_k = (u_k, v), \\
    & \arrtime(a_1, \ldots, a_k) = \theta \}
  \end{aligned}
\end{equation}
The time-expanded graph $D^{\T} = (V^{\T}, A^{\T})$ has vertices
$V^{\T} \define \{ v_{\theta} \mid v \in V,\, \theta \in \T(v) \}$
and arcs
\begin{equation}
  A^{\T} \define \{ (u_{\theta}, v_{\theta'}) \mid
  u_{\theta}, v_{\theta'} \in V^{\T},
  \theta' = \theta + c_{uv}(\theta) \}.
\end{equation}
We will denote an arc $(u_{\theta}, v_{\theta'})$ by $(u,v, \theta)$.
We will from now on assume that $c_{uv}(\theta) > 0$ for all
$(u, v) \in A$, $\theta \in \T(v)$. This directly implies
that $D^{\T}$ is acyclic.

\begin{exmp}
  Figure~\ref{pic:example} shows a directed graph with travel times for each arc
  and its time expansion. Any tour on $D$ can be embedded into $D^{\T}$
  as a $(s_0,s_{\theta})$-path.
  \begin{figure}[ht]
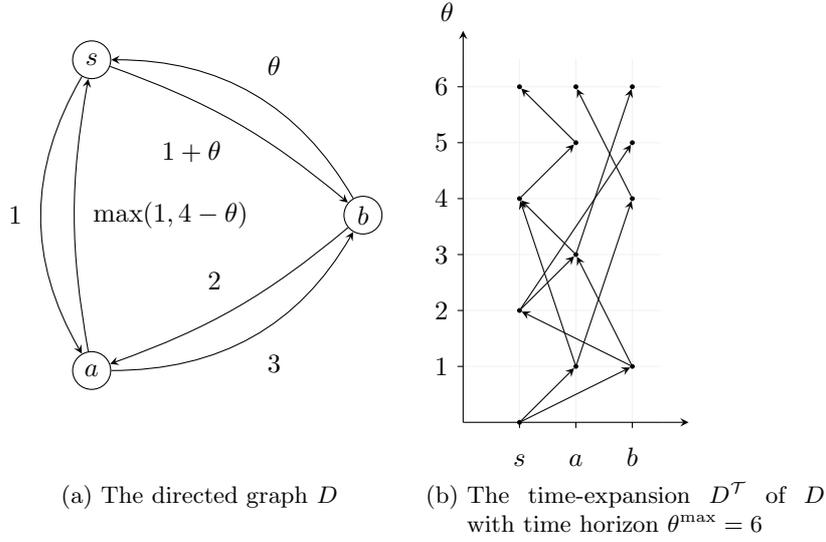

    \begin{center}
      \begin{subfigure}[t]{0.4\textwidth}
        \includegraphics[width=\textwidth]{ExampleGraph.tikz}
        \caption{The directed graph $D$}
        \label{subfig:example_graph}
      \end{subfigure}
      ~
      \begin{subfigure}[t]{0.4\textwidth}
        \includegraphics[width=0.66\textwidth]{ExpandedExample.tikz}
        \caption{The time-expansion $D^{\T}$ of $D$ with time horizon $\theta^{\max} = 6$}
        \label{subfig:expanded_example}
      \end{subfigure}
      \caption{A directed graph $D$ and its time-expansion $D^{\T}$.}
      \label{pic:example}
    \end{center}
  \end{figure}
\end{exmp}

\subsection{An Arc-based formulation}

We consider an arc-based formulation based on the graph $D^{\T}$
consisting of binary variables $x_{uv,\theta}$ for each arc in
$D^{\T}$.  The resulting formulation is inspired by
a three-index-formulation for STDTSP~\cite{Picard1978}. The formulation
consists of a flow through the time-expanded graph $D^{\T}$, which has
to cover each vertex exactly once.
\begin{equation}
  \label{eq:arc_based}
  \begin{aligned}
    \min\ & \sum_{(u,v,\theta) \in A^{\T}} c_{uv,\theta} \cdot x_{uv, \theta} && \\
    &\sum_{\theta \in \T(v)} \sum_{(v, w, \theta) \in \delta^{+}(v_{\theta})} x_{vw, \theta} =1 &&\text{for all } v \in V \\
    & \sum_{(v,w, \theta) \in \delta^{+}(v_\theta)} x_{uv,\theta} - \sum_{(u, v, \theta') \in \delta^{-}(v_{\theta})} x_{uv,\theta'}
    = 0
    && \text{for all } v \ne s, \theta \in \T(v) \\
    &x_{vw,\theta} \in \set{0,1} && \text{for all } v \neq w, \theta \in \T(v).
  \end{aligned}
\end{equation}

\begin{rem}
  \label{rem:flow}
  Any solution of the IP or its LP-relaxation can be decomposed into
  a set of paths leading from vertex $s_0$ to $s_{\theta}$ for
  $\theta > 0$. Thus, an equivalent cost function is given by
  $\sum_{\theta \in \T(s)} \sum_{(v,s,\theta') \in \delta^{-}(s_\theta)} \theta \cdot  x_{vs,\theta}$.
\end{rem}

\paragraph{Relation to the static ATSP}

In the following, we will consider the relationship between the \tdtsp and the
static ATSP problem. To this end, we let $x : A \to \R_{\geq 0}$ be the combined
flow traversing an arc $(u, v) \in A$, i.e.
\begin{equation}
  x_{uv} \define \sum_{\theta \in \T(u)} x_{uv, \theta}
\end{equation}
where $(x_{uv,\theta})_{(u, v, \theta) \in A^{\T}}$ is a feasible
solution of \eqref{eq:arc_based}. Observe that the covering
constraints and the flow conservation yield the well-known 2-matching
equations $x(\delta^{+}(v)) = x(\delta^{-}(v)) = 1$ for all $v \in V$.
Similarly, integrality together with the condition $x_{uv} \leq 1$
follows from the integrality of the original solution. However, a
correct static ATSP formulation still requires subtour
elimination constraints (SECs) of the form
\begin{equation}
  x(\delta^{+}(S)) \geq 1 \quad \forall S \subset V,\ S \neq \emptyset, V.
\end{equation}
Since $D^{\T}$ is acyclic, any solution of \eqref{eq:arc_based} is
guaranteed to satisfy the additional SECs\footnote{Equivalently,
flow augmentation techniques such as \cite{Gouveia1995} used to strengthen
ATSP formulations are redundant for solutions of the \tdtsp.}.
Still, SECs are not necessarily satisfied by fractional
solutions. Thus, formulation \eqref{eq:arc_based} can be strengthened
by separating SECs with respect to the underlying static ATSP.

We can produce fractional solutions to the static ATSP problem by
computing the combined flow after having successfully separated all SECs.
Consequently, we can use any ATSP separator to derive valid inequalities for the
ATSP which we can then formulate in terms of the variables corresponding
to $D^{\T}$ in order to strengthen our formulation.

Note that while any feasible \tdtsp solution is feasible for the underlying
ATSP, generic ATSP solutions do not necessarily produce feasible solutions
of the \tdtsp.
Specifically, no tour $T=(a_1,
\ldots, a_n)$ with $\arrtime(T) > \theta^{\max}$ can be embedded into
$D^{\T}$. The complete description of the \tdtsp in terms of combined
variables can be obtained by adding \emph{forbidden path} constraints
of the form
\begin{equation}
  \sum_{a \in P} x_a \leq k - 1 \quad
  \forall \: P = (a_1, \ldots, a_k) : \arrtime(P) > \theta^{\max}.
\end{equation}
As a result, facet-defining ATSP inequalities, while valid, are
not necessarily facet-defining for the \tdtsp.

\begin{lem}[Dimensionality]
  Let $n^{\T} \define |V^{\T}|$, $m^{\T} \define |A^{\T}|$ be the
  number of vertices and arcs, respectively, of $D^{\T}$, and $V^{s}
  \define \{ s_{\theta} \in V^{\T} \}$ the $n^{s} \define |V^{s}|$
  many vertices corresponding to $s$. If $c$ satisfies the
  time-dependent triangle inequality
  \eqref{eq:time_dependent_triangle}, then
  \begin{equation}
    \dim(P) \leq m^{\T} - (n^{\T} - n^{s}) - (n - 1)
  \end{equation}

\end{lem}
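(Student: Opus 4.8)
The plan is to bound $\dim(P)$ from above by exhibiting $(n^{\T}-n^{s})+(n-1)$ linearly independent equations that hold on every feasible point, so that $\dim(P)\le m^{\T}-\operatorname{rank}(A_{=})$ for the matrix $A_{=}$ of these equalities. I would take all $n^{\T}-n^{s}$ flow-conservation equations together with the $n-1$ covering equations indexed by $v\ne s$. The source covering is deliberately left out: already for $n=2$ one checks that the full set of $n$ coverings and the flow equations carries one linear dependency, so at most $n-1$ of the coverings can be adjoined independently.

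First I would show that the flow-conservation rows alone are independent. They are exactly the non-source rows of the incidence matrix of the acyclic digraph $D^{\T}$. Since every time-vertex lies in some $\T(v)$ and is therefore reachable from $s_{0}$, the graph $D^{\T}$ is weakly connected, so the left kernel of its incidence matrix is one-dimensional, spanned by the all-ones vector. That dependency has full support and hence cannot be reproduced from the non-source rows alone, so the $n^{\T}-n^{s}$ flow rows have full rank.

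The heart of the argument, and the step I expect to be the \emph{main obstacle}, is to show that adjoining the coverings $\operatorname{cov}_{v}$, $v\ne s$, raises the rank by exactly $n-1$. Expanding a hypothetical relation $\sum_{v\ne s}a_{v}\operatorname{cov}_{v}+\sum_{w_{\theta}}b_{w_{\theta}}\,\mathrm{flow}_{w_{\theta}}=0$ arc by arc, and setting $\tilde b_{w_{\theta}}:=b_{w_{\theta}}$ for $w\ne s$ and $\tilde b_{s_{\theta}}:=0$, turns it into a node potential with $\tilde b_{q_{\theta'}}-\tilde b_{p_{\theta}}=a_{p}$ along every arc $(p,q,\theta)$ with $p\ne s$, and $\tilde b_{q_{\theta'}}=0$ along every arc leaving $s$. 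Here the time-dependent triangle inequality \refeq{eq:time_dependent_triangle} becomes essential. Fix $v\ne s$ and put $\tau_{v}:=c_{sv}(0)$; a short induction from $\theta=0$ using \refeq{eq:time_dependent_triangle} shows that no walk reaches $v$ earlier than $\tau_{v}$, so $\tau_{v}=\min\T(v)$ and the earliest copy $v_{\tau_{v}}$ is entered by the direct arc $(s,v,0)$, forcing $\tilde b_{v_{\tau_{v}}}=0$. The same inequality, iterated, bounds the round-trip time $\tau_{v}+c_{vs}(\tau_{v})$ by the arrival time of any feasible tour through $v$; hence, provided the horizon accommodates this elementary round trip, the return arc $(v,s,\tau_{v})$ also lies in $A^{\T}$, and reading off its coefficient gives $a_{v}+\tilde b_{v_{\tau_{v}}}=0$, so $a_{v}=0$. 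As $v$ was arbitrary, all $a_{v}$ vanish, the potential is constant along every arc and zero on the source copies, whence $\tilde b\equiv 0$ by connectivity and every $b_{w_{\theta}}=0$.

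This yields $\operatorname{rank}(A_{=})\ge(n^{\T}-n^{s})+(n-1)$ and therefore the claimed bound. The entire weight rests on the triangle-inequality step: it is what guarantees that each vertex's earliest copy is reached straight from the source (pinning the potential there) and that the elementary return arc exists. Without \refeq{eq:time_dependent_triangle} the earliest copy of $v$ could be reachable only through a detour, the pin $\tilde b_{v_{\tau_{v}}}=0$ would be lost, and the coverings might fail to contribute the full $n-1$ to the rank, so the bound could break.
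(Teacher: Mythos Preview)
Your argument is correct and reaches the same bound, but by a different route than the paper. The paper argues geometrically: after noting (as you do) that the flow-conservation rows on $V^{\T}\setminus V^{s}$ have full rank, it shows each covering $\mathrm{cov}_v$ ($v\neq s$) is non-redundant by exhibiting an explicit point satisfying the reduced system but violating $\mathrm{cov}_v$. Concretely, it takes any tour $T$ through $v$ and shortcuts it to the sequence $T'$ that skips $v$; the time-dependent triangle inequality ensures $T'$ still embeds into $D^{\T}$, so $T'$ satisfies every equation except $\mathrm{cov}_v$. Your approach is algebraic: you expand a putative linear dependence among the rows arc by arc into a potential $\tilde b$, use the triangle inequality to certify that the direct arc $(s,v,0)$ hits the earliest copy $v_{\tau_v}$ (pinning $\tilde b_{v_{\tau_v}}=0$), and then read off $a_v=0$ from the return arc $(v,s,\tau_v)$. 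The paper's proof is shorter and makes the role of the triangle inequality visually immediate---it literally shortcuts a detour---whereas your proof isolates exactly which two arcs per vertex carry the independence, and your induction with the target vertex fixed is a cleaner justification of ``$\tau_v=\min\T(v)$'' than what the paper spells out. Both proofs tacitly rely on the instance admitting at least one tour (so the relevant shortcut, respectively round trip, fits in the horizon), which is harmless since otherwise $P=\emptyset$.
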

\begin{proof}
  The dimension of $P$ is trivially bounded by the difference between the
  number of variables $m^{\T}$ and the rank of the system of
  equations \eqref{eq:arc_based}. Since $D^{\T}$ is acyclic, the
  system of equations ensuring flow conservation on the
  vertices in $V^{\T} \setminus V^{s}$ has full rank of $n^{\T} - n^{s}$.
  Consider a vertex $v \neq s$ contained in a tour
  $T=(v_1 = s, \ldots, v_{i - 1}, v_{i} = v, v_{i + 1}, \ldots, v_n)$
  given as a sequence of vertices in $D$. Assume that
  equation $x(\delta^{+}(v)) = 1$ is struck from system \eqref{eq:arc_based}.
  Since $c$ satisfies the time-dependent triangle inequality,
  the sequence $T'=(v_1 = s, \ldots, v_{i - 1}, v_{i + 1}, \ldots, v_n)$
  is a feasible solution to the reduced system of equations, yet
  it is not a feasible \tdtsp solution. Therefore, none of
  the $n - 1$ equations corresponding to vertices other than
  $s $ can be struck without increasing
  dimensionality. Thus, the combined system of equations has
  the required rank.
\end{proof}

\subsection{Pricing}

The approach of time-expansion can be used to solve a variety of
time-dependent problems~\cite{QuickestFlows,Railroad}.
Unfortunately, the  time-expansion
of a problem quickly increases the size of the resulting formulations.
Specifically, in case of the \tdtsp, even moderately sized instances
of less than one hundred vertices can result in millions of arcs in $A^{\T}$,
making it difficult to solve even the LP-relaxation of the \tdtsp.

To alleviate the problem, an obvious approach is to use
column-generation (see \cite{CGSummary} for a summary on the topic).
Nevertheless, there are a number of different variants of CG, especially
regarding the pricing strategy. It is not immediately clear which one is the
best for the \tdtsp. We will present the tested approaches in the following.

Let $(\lambda_v)_{v \in V}$, $(\mu_{v_{\theta}})_{v \neq s,
\theta \in \T(v)}$ be
the dual variables of the respective constraints in~\eqref{eq:arc_based}.
The reduced cost of an arc $(v, w, \theta)$ is then given by
\begin{equation}
  \overline{c}_{vw,\theta} \define c_{vw}(\theta) -
  \left( \lambda_{v} + \mu_{v_{\theta}} - \mu_{w_{\theta + c_{vw}(\theta)}} \right).
\end{equation}
To obtain a feasible solution to populate the initial LP, we compute
a heuristic tour, which we then add as a whole.

\paragraph{Lagrangean pricing} While the pricing approach
significantly reduces the formulation size and facilitates the
solution of much larger instances, the approach can be significantly
improved. Consider a single arc $(v, w, \theta)$ with negative reduced
costs: The arc can only obtain a positive value in the subsequent
LP-solution if it is part of a $(s_0,s_{\theta})$-path.  It is therefore
advisable to generate entire paths at once rather than single arcs.
The pricing problem then becomes a shortest path problem in $D^{\T}$.
Even though the reduced costs are negative, the pricing problem can
be solved using breadth-first search since $D^{\T}$ is acyclic.
We also employ a technique known as
\emph{Lagrangean pricing}~\cite{LagrangeanPricing}.
The technique is based on the observation that the pricing problem
is a Lagrange relaxation of the full LP, which implies that
the difference between the current LP value and the value of the
full LP is bounded by the minimum reduced cost of, in this case,
an $(s_0,s_{\theta})$-path. The pricing loop is aborted as
soon as the cost rises above a value of $-\epsilon$. This approach
helps to deal with the degeneracy often present in formulations
of combinatorial optimization problems by avoiding to price
variables which have negative reduced costs without attaining
a nonzero value in the optimal basis of the LP relaxation.
It is also the case that paths obtained from the pricing procedure
occasionally correspond to tours in $D$ and therefore
to feasible \tdtsp solutions.

\paragraph{Pricing cycle-free paths}
\label{paragraph:cycle_free}
Unfortunately, many paths which are generated throughout the pricing
do not share much resemblance with tours in the underlying graph
$D$: On the one hand certain paths only contain few vertices and
lead almost immediately back to $s_{\theta}$. We will address this
problem using the propagation of lower bounds.  On the other hand,
paths frequently contain cycles with respect to $D$, \ie they
contain two different versions $v_{\theta}$, $v_{\theta'}$ of the
same vertex $v \neq s$. It is of course possible to generate
inequalities in order to cut off a fractional solution $\tilde{x}$
containing a cycle in its path decomposition (see
Subsection~\ref{subsection:inequalities}). However, ideally we
would like not to have paths containing cycles in the LP in the
first place. Obviously, the problem of finding an acyclic path of
negative reduced cost is equivalent to finding the optimal solution
to the \tdtsp problem. It is however possible to find $k$-cycle free
paths, \ie paths not containing a cycle with at most $k$ arcs using
Dijkstra-like labeling schemes~\cite{AcyclicShortestPath}.
Specifically, avoiding 2-cycles merely increases computation time by
a factor of two, while significantly improving the resulting lower
bounds.  It is also possible to avoid $k$-cycles for arbitrary $k$;
however, the proposed algorithm takes $\O((k!)^2)$ time, which
quickly makes the approach intractable for increasing values of $k$.

\subsection{Valid inequalities}
\label{subsection:inequalities}

In order to strengthen the formulation, a number of additional inequalities
can be included in the formulation. We give a brief summary of valid inequalities,
some of which are well-known ATSP inequalities, whereas others are either adaptations
of STDTSP inequalities or newly derived ones.

\paragraph{ATSP inequalities}
\label{paragraph:dkp}

Apart from the subtour elimination constraints, the probably best-known family of
facet-defining inequalities for the ATSP goes by the name of
$D_{k}^{+}$-inequalities \cite{LineareCharatkerisierung,TSPVariations}.
$D_{k}^{+}$-inequalities are defined on a complete directed graph $D=(V, A)$ with $n$ vertices.
To simplify notation, for sets $S,T \subseteq V$ we let
$[S:T] \define \{ (u, v) \in A \mid u \in S, v \in T\}$.
The $D_{k}^{+}$-inequality for a sequence $(v_1, \ldots, v_k)$ of $2 \leq k < n$
distinct vertices is given by
\begin{equation}
  \begin{split}
    \sum_{j=1}^{k -1} x_{v_{j},v_{j+1}} + x_{v_k,v_1}
    + 2 x([\{v_1\} : \{v_3, \ldots, v_k\}]) & \\
    + \sum_{j = 4}^{k} x([\{v_j\} : \{v_3, \ldots, v_{j - 1}\}]) \leq k - 1.
  \end{split}
\end{equation}
The separation of $D_{k}^{+}$-inequalities involves the enumeration of
possible sequences in a branch and bound-like fashion. Nonetheless,
the separation works well in practice, since many of the possible
sequences can be pruned. Note that in the special case $k = 2$
the inequality becomes $x_{uv} + x_{vu} \leq 1$.

\paragraph{Incompatibilites}
\label{paragraph:odd_cat}

Since any feasible solution to an integer program is a stable set with
respect to its incompatibility graph $\I$, cliques and odd cycles in $\I$ are
the basis for many strong inequalities for arbitrary integer programs, a
fact which is often used in MIP solvers.
While the incompatibility graph of the symmetric TSP problem is empty, the
ATSP problem already has a significant amount of incompatibilities. Specifically,
the arcs $(u, v) \neq (u',v')$ are incompatible if $v = v'$ or $u = u'$ or
both $u = v'$ and $u' = v$. Clique inequalities are implied by the constraints
$x(\delta^{+}(v)) = x(\delta^{-}(v)) = 1$ and $x_{uv} + x_{vu} \leq 1$.
However, it is possible to derive inequalities from odd cycles in $\I$.
These \emph{odd closed alternating trails}~\cite{NewFacets} (odd
CATS for short) can be separated heuristically by computing shortest
paths in an auxiliary bipartite graph. Note that with respect to the
incompatibility graph of the \tdtsp, the cuts correspond to
odd cycles of cliques rather than odd cycles of vertices. As a result the
obtained cuts are stronger than ordinary odd cycle cuts and easier
to separate due to the small size of the incompatibility graph of the
underlying ATSP.

\paragraph{Odd path-free inequalities}
\label{paragraph:odd_path_free}
Consider a set $S \subseteq V \setminus \{s\}$ of vertices
of the original graph. Let
$V^{\T}(S) \define \{u_{\theta} \in V^{\T} \mid u \in S \}$
be the corresponding vertices in $D^{\T}$ and $A^{\T}$
the induced subgraph:
\begin{equation}
  A^{\T}(S) \define \{ (u, v, \theta) \in A^{\T} \mid u, v \in S \}.
\end{equation}
The intersection of any tour with the set $A^{\T}$ can contain at most $|S| - 1$ arcs,
the corresponding inequality is equivalent to a subtour elimination constraint.
If $|S| = 2k + 1$ is odd, another inequality
can be obtained by considering certain subsets of $A^{\T}$. Specifically,
$A' \subseteq A^{\T}$ is called \emph{path-free}, if it does not
contain a path consisting of at least three different vertices in $S$. The intersection
of a path-free set $A'$ with any tour can contain at most $k$ arcs,
yielding the following \emph{odd path-free} inequality (see Figure~\ref{pic:path_free}):
\begin{equation}
  \sum_{(u, v, \theta) \in A'} x_{uv, \theta} \leq k.
\end{equation}
In order to separate an odd path-free inequality we first have to
find some \emph{promising} subset $S$, i.e. a set off odd size forming
a clique of sufficient weight. For such a set $S$ the separation problem
is equivalent to finding a stable set of maximum weight in
the (undirected) line graph of $(V^{\T}(S), A^{\T}(S))$.

Since both problems are $NP$-hard themselves, we make several
restrictions in order to decrease the computational costs.
First, note that larger values of $k$ result in larger
line graphs making the computation of stable sets much
more challenging. We therefore chose to restrict ourselves
to the case of $k = 1$ (in which odd path-free sets
correspond to cliques in the incompatibility graph $\I$).
This restriction also enables us to find
promising sets in polynomial time by enumerating all
3-sets of vertices in $D$. In order to avoid separating
very similar inequalities we consider only
the largest promising 3-set containing
each vertex $v \in D \setminus \{s\}$. The separation
for each set is performed using an integer program.
\begin{figure}[ht]
  \centering
  \includegraphics[width=0.25\textwidth]{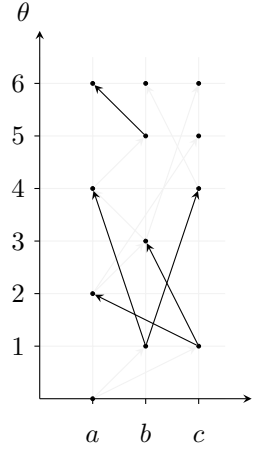}
  \caption{A path-free set of arcs on three vertices}
  \label{pic:path_free}
\end{figure}

\paragraph{Lifted subtour elimination inequalities}
\label{paragraph:lsec}

As discussed above, subtour elimination constraints
can be used to cut off fractional \tdtsp solutions.
SECs can be separated in polynomial time by solving a series
of flow problems on the underlying graph, ultimately yielding
a set $S \subseteq V$, $S \neq \emptyset, V$ maximizing
$x(\delta^{+}(S))$. In the following we will assume w.l.o.g that
$s \in S$. SECs can be strengthened by imposing an upper limit on
the time of the initial departure from the set $S$.
After all, any tour has to leave $S$ sufficiently early
to be able to reach the vertices in $V \setminus S$ and return to
$s$. More formally, let $\hat\theta$ be such that
\begin{equation}
  \hat\theta \geq
  \max \{ \theta \mid \text{There exists a tour $T$ leaving $S$ for the first time at $\theta$} \}.
\end{equation}
Then, the following \emph{lifted} subtour elimination constraint
(LSEC) is valid for all tours:
\begin{equation}
  \sum_{\substack{ (u, v, \theta) \in A^{\T} \\ u \in S, v \notin S, \\ \theta \leq \hat\theta }} x_{uv, \theta} \geq 1
\end{equation}
Clearly, $\hat\theta$ is maximized for a tour which first serves $S$,
then $V \setminus S$ and returns to $s$ immediately afterwards. Thus,
maximizing $\hat\theta$ would involve the
solution of a series of \tdtsp problems on $V \setminus S$,
an approach which is clearly intractable in practice. We propose to
compute a larger value of $\hat\theta$ given by $\theta^{\max} - \check\theta$
with a lower bound $\check\theta$ on the length of the shortest tour on $V \setminus S$.
We derive $\check\theta$ by considering an ATSP on $V \setminus S$ with costs given
by suitably chosen lower bounds on the travel times $c_{uv, \theta}$. The ATSP itself can
be bounded from below by computing an arborescence of minimum weight.

\paragraph{Cycle inequalities}
\label{paragraph:cycle}
While the formulation (\ref{eq:arc_based}) does not contain any subtours it is
possible that a path in the LP-relaxation visits a vertex at several
different points in time. Specifically, a path $P$ in $D^{\T}$ can be
of the form $P = (\ldots, (u, v, \theta), (v, w, \theta'), \ldots)$,
forming a cycle of length 2 in
$D$ (where $v \neq s, \theta' = \theta + c_{uv}(\theta)$).
We know that if a tour visits $v$ at $\theta'$,
it has to go on using an arc $(v, w, \theta')$ such
that $w \neq u$. Hence the following inequality is valid:
\begin{equation}
  x_{uv,\theta} \leq \sum_{w \neq u,v} x_{vw,\theta'}
\end{equation}
More generally, consider a path which contains the sequence
$(u_1, v_1, \theta_1), \ldots, (u_k, v_k, \theta_k)$ such that
$v_1 = v_k$. In this case it makes sense to add the following
inequality:
\begin{equation}
  x_{u_1v_1,\theta_1} \leq \sum_{j=1}^{k-1} \sum_{v \notin \{u_1, v_1, \ldots, v_j \}} x_{v_j v,\theta_{j+1}}
\end{equation}
In order to separate these $r$-cycle inequalities it is convenient
to consider a path-decomposition of the flow through the network
and to eliminate $r$-cycles from the individual paths.

\paragraph{Unitary AFCs}
\label{paragraph:unitary_afc}
Unitary AFCs (admissible flow constraint) were introduced in
\cite{TimeDependentTSPPoly} for the STDTSP. In the context of the
\tdtsp they can be explained as follows: Consider an arc $(u, v,
\theta)$ with $u, v \neq s$ carrying a nonzero flow. The flow enters
some set of vertices which has to be left again in order to reach the
source $s$. Specifically, let $X \subseteq V^{\T}$ such that
\begin{enumerate}
\item
  $X$ contains $v_{\theta + c_{uv}(\theta)}$.
\item
  Every vertex $(v', \theta') \in X$ is reachable
  from $(v, \theta + c_{uv}(\theta))$
  using only arcs in the graph induced by $X$.
\item
  The set $X$ contains no copies of the vertices $u, v, s$.
\end{enumerate}
In this case we can add the following inequality:
\begin{equation}
  x_{uv, \theta} \leq \sum_{\substack{(u', v', \theta') \in \delta^{+}(X) \\ v' \neq u, v}} x_{u'v',\theta'}
\end{equation}
In order to separate these types of inequalities we consider for
a fixed arc $(u, v, \theta)$ all vertices which are
reachable from $(v, \theta + c_{uv}(\theta))$. We then solve a series
of min-cut problems with capacities according to the fractional solution.
If we find a cut with a value of less than one we add it to the LP.

\subsection{Speedup techniques}

The addition of cutting planes already significantly strengthens
formulation \eqref{eq:arc_based}. There are however several other
techniques which can be used to speed up the computation of the
optimal tour in a branch-cut-and-price framework:

\paragraph{Propagation}
\label{paragraph:propagation}
At any given step in the solution process we have a (local) dual bound
$\underline{\theta}$ given by the value of the
LP-relaxation (of the current node in the branch-and-bound tree) and
a primal bound $\overline{\theta}$ given by the currently best known
integral solution. Clearly, any arc $(v, w, \theta)$ with
$\theta > \overline{\theta}$ can be fixed to
zero, as can any arc $(v, s, \theta)$ with
\begin{equation}
  \theta + c_{vs}(\theta) < \underline{\theta}.
\end{equation}
As these bounds become more accurate, more and more arcs can
be discarded. The relaxation can often be strengthened significantly
by employing this technique since the LP-relaxations frequently
consists of paths which send an amount of flow for $s_0$ back
to $s_{\theta}$ via a path with containing very few vertices and
leading back into $s$ at a time lower than $\underline{\theta}$.

\paragraph{Compound branching}
Traditionally, a branch-and-bound approach would branch on individual
variables $x_{uv, \theta}$, leading to a highly unbalanced
branch-and-bound tree. We instead propose to branch on the combined
flow $(x_{uv})_{(u,v) \in A}$. We incorporate the incompatibilities with respect to
the underlying ATSP in order to increase the dual bounds in child
nodes.  Specifically, whenever an arc $(u, w)$ is fixed to one during
the branching, we fix every incompatible arc $(u', v')$ to zero. We
incorporate the branching rule into the pricing loop by ignoring arcs
fixed to zero and incorporating the dual costs of arcs which have been
fixed to one.

\paragraph{Primal heuristics}
\label{paragraph:primal_heuristics}

In order to obtain improved primal solutions we a simple
heuristic based on the current LP-solution
$(x^{*}_{uv, \theta})_{(u,v, \theta) \in A^{\T}}$.
Specifically, we construct a path $P$ traversing
$D^{\T}$ starting at $s_0$ by
appending arcs to vertices whose counterparts in $D$ are still
unexplored by $P$ until the path forms a tour in $D$.  During the
construction of the tour we disregard arcs fixed to zero by the
compound branching rule introduced above. If there are multiple arcs
to choose from, we compute scores using the following metrics:
\begin{enumerate}
\item
  We score arc $(u, v, \theta)$ by the inverse of
  its travel time $c_{uv}(\theta)$.
\item
  We evaluate $(u,v, \theta)$ according to the value of $x^{*}_{uv, \theta}$
  using travel times to break ties.
\item
  We measure $(u, v, \theta)$ using the combined value $x^{*}_{uv}$ using
  a similar tie-breaking rule.
\end{enumerate}
Note that the iterative construction of paths in $D^{\T}$ is computationally
inexpensive. Thus, to increase the chance of finding an improved tour,
we randomize the selection of arcs based on probabilities proportional
to the different score functions and perform several runs using
different random seeds.

\subsection{A path-based formulation}

Recall that any feasible solution of the \tdtsp problem
\eqref{eq:arc_based} can be decomposed into paths from $s_0$ to
$s_{\theta}$ for some $\theta \in \Theta$. With respect to $D$ these
paths correspond to cycles containing vertex $s$. We let $\mathcal{P}$ be
the set of paths, let $\alpha_{v,P} \define |\{ \theta \mid (v, w, \theta) \in P \}|$
and reformulate the problem in terms of individual paths:
\begin{equation}
  \label{eq:path_based}
  \begin{aligned}
    \min\ &\sum_{P \in \mathcal{P}} c_P x_P \\
    &\sum_{P \in \mathcal{P}} \alpha_{v,P} \cdot x_P = 1
    &&\text{for all } v \in V \\
    &x_P \in \{0, 1\} && \text{for all } P \in \mathcal{P} \\
  \end{aligned}
\end{equation}
Note that any solution of this IP consists of a single variable
$x_P$ set to $1$ and all others set to $0$, in which case $P$ must
correspond to a tour. Any fractional solution consists of at most
$n$ different paths which need not be tours in $D$.
The resulting system is small in terms of the number
of constraints at the expense of the number of variables. Thus,
a pricing approach is absolutely necessary in this case. Since arc-based
and path-based solutions are equivalent, all previously discussed techniques
can be easily adapted to the path-based formulation.

\section{Computational experiments}
\label{section:computational}

\subsection{Instances}

In order to test different formulations and techniques we generated
several problem instances, each given by a directed complete graph and
cost functions associated with its arcs. We embedded the vertices of
the graph into $\{0, \ldots ,100\}^{2}$ and introduced (symmetric)
costs $c_a$ using rounded-down euclidean distances between the points
of the embedding.

We then augmented the static costs to time-dependent functions $c_a(\theta)$.
We first added $M \in N$ time steps $\theta_{1} < \theta_{2} < \ldots < \theta_{M}$
within the range $\{0, \ldots, \theta^{\max}\}$ and used these time
steps to construct a piecewise linear function
$f_a(\theta) : \mathbb{N} \to \mathbb{Z}$:
\begin{enumerate}
\item
  We let $f_a(0) \define 0$ and fixed the slope at zero to $+1$.
\item
  The slope alternates between $+1$ and $-1$ with break points
  at $\theta_i$ for $i = 1,\ldots, M$.
\end{enumerate}
We let $\lambda > 1$ and define the cost function
$c_a : \{0, \ldots, \theta^{\max}\} \to \N$ as
\begin{equation}
  c_a(\theta) \define c_a + \max(\min(f_a(\theta), \lambda c_a), 0)
\end{equation}
The parameter $\lambda$ controls the multiple of $c_a(\theta) / c_a$ which
can be attained. We generally let
$\lambda = 3$ (see Figure~\ref{pic:func} for an example) and distribute
$M = 100$ break point over an interval of 1000 points in time.
\begin{center}
  \begin{figure}
    \begin{center}
      \begin{tikzpicture}
        \begin{axis}[xmin=0,ymin=0, ymax=40,xmax=100,grid=major, grid style={thin,black!5}, unit vector ratio=1 1]
          \addplot[mark=none, black] table [x=x, y=y, col sep=comma] {Data/func.plot};
        \end{axis}
      \end{tikzpicture}
      \caption{A sample plot of the travel time function for costs of 10, a time
        horizon of $\theta^{\max} = 100$ and $M = 10$ break points.
        The cost is constrained by a factor of $\lambda = 3$}
      \label{pic:func}
    \end{center}
  \end{figure}
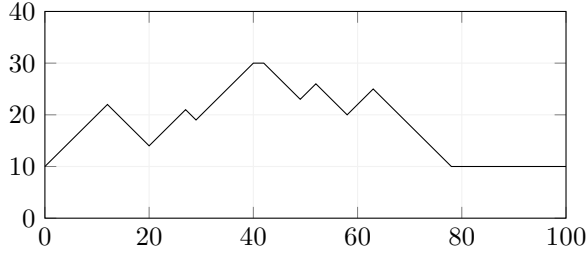
\end{center}
Note that the functions defined above satisfy the FIFO property. By using
shortest-path distances we ensure that the time-dependent triangle inequality
is satisfied as well.

\subsection{Formulations}

We implemented the different formulations based on the
SCIP\cite{SCIP} IP solver\footnote{SCIP version 4.0.0 with SoPlex 3.0.0 as an
  LP-solver}. We ran all experiments on an Intel Core i7 CPU clocked
at \SI{3.2}{\giga\hertz} on a system with \SI{8}{\giga\byte} of
RAM. We started with 50 relatively small instances containing 20 vertices
each. We first computed optimal tours with respect to the static
costs and used the resulting travel times with respect to the
time-dependent cost to derive smaller time horizons to decrease the
size of the corresponding time-expanded graphs. Despite their
relatively small size, the time-expansions were quite large, each
containing between \num{80000} and \num{170000} arcs.

We started by comparing the different combinations of formulations
and pricing approaches. We were for the most part not able to
determine the optimal solutions within the prescribed time
limit of \SI{3600}{\second}. There are however significant
differences between the different
formulations (see Table~\ref{table:formulations} for details).
\begin{itemize}
\item
  The different pricing approaches differ significantly performance-wise.
  Specifically, the arc-based pricing approach fails to solve even a single
  root LP.
\item
  The path-based formulation \eqref{eq:path_based} generally yields
  smaller LPs than the full arc-based formulation. However, similarly
  to the arc-based pricing approach, many root LPs are not solved within
  the time limit.
\item
  The path-based pricing approaches reduce the average size of the solved LPs
  by about 90 \%. As a result, almost all of the root LPs are solved successfully.
\item
  Using a dual stabilization approach does not result in smaller gaps
  compared to the simple path-based pricing.
\item
  The most successful approach is based on pricing 2-cycle free paths.
  While the average LP size does not change much, the remaining gap
  after the exhaustion of the computation time is decreased furthest
  when pricing 2-cycle free paths. This is due to the fact that,
  as mentioned above, the improved dual bound more than makes
  up for the increased computational time required to
  avoid 2-cycles during the pricing of new paths.
\end{itemize}

\begin{center}
  \newcommand{\DataRow}[1]{
  \csvreader[head to column names,separator=semicolon]{
    Data/Formulations/#1.csv}{}{
    \numSolved &
    \numLPSolved &
    % \runningTime &
    \remainingGap &
    \numCols &
    \numRows}}

\newcommand{\mcrot}[4]{\multicolumn{#1}{#2}{\rlap{\rotatebox{#3}{#4}~}}}
\newcommand*\rot{\rotatebox{90}}

\begin{table}[htb]

  \caption{Solving statistics for different formulations, based on 50 different instances consisting of 20 vertices each.}
  \label{table:formulations}

  \renewcommand{\arraystretch}{1.25}
  \setlength\tabcolsep{4pt}
  \begin{tabular*}{\textwidth}{lrrrrrr}

    \fatline

    & \mcrot{1}{c}{45}{Number of instances solved} & \mcrot{1}{c}{45}{Number of root LPs solved}  & \mcrot{1}{c}{45}{Remaining gap} & \mcrot{1}{c}{45}{Number of columns}  & \mcrot{1}{c}{45}{Number of rows} \\

    \noalign{\vskip 1mm}
    \hline
    \noalign{\vskip 1mm}

    Arc-based & \DataRow{simple_program} \\
    Arc-based  pricing & \DataRow{sparse_edge_pricer} \\
    Path-based pricing & \DataRow{sparse_program} \\
    Stabilizing pricing & \DataRow{sparse_stabilizing_pricer} \\
    \textbf{2-cycle free pricing} & \DataRow{sparse_simple_acyclic_pricer} \\
    Path-based formulation & \DataRow{path_based_program} \\
    \fatline

  \end{tabular*}
\end{table}

\end{center}

\subsection{Valid inequalities and primal heuristics}

We proceed to study the effect of adding valid inequalities in order
to increase dual bounds. To this end we restrict ourselves to the
arc-based formulation with 2-cycle free path pricing, which
performed best in the experiments conducted this far. In order
to evaluate the effectiveness of different classes of valid
inequalities we again consider the remaining gap after \SI{3600}{\second}
of computations. The remaining gap is a good measure of the overall
effectiveness of the different classes of inequalities, since it
strikes a balance between the increase of the dual bound and
the required separation time. The latter can be substantial, in
particular if the separation involves the solution of $\NP$-hard problems.
We make the following observations (see the details in
Table~\ref{table:inequalities}):
\begin{itemize}
\item
  The separation of cycle inequalities actually increases the gap compared
  to the formulation without any separation. It is therefore inefficient
  to consider these inequalities at all.
\item
  There is no significant decrease with respect to the remaining gap
  when separating unitary AFC, odd path-free, and odd CAT inequalities.
  Apparently, the separation time for these classes of inequalities does
  not merit the increased dual bounds.
\item
  By far the most efficient classes of inequalities are (lifted) subtour
  elimination constraints. Few inequalities suffice to significantly
  decrease the remaining gap.
\item
  Adding primal heuristics decreases the remaining gap by a considerable margin.
  It is particularly efficient to construct tours based on the combined
  flow $x_{uv}^{*}$ of the current LP relaxation. Apparently the LP is able
  to accurately determine the underlying arcs which are contained in
  tours with small travel times. In contrast the built-in heuristics
  seem to be unable to take advantage of this fact.
\item
  The propagation of upper and lower bounds yields an additional improvement
  on the running times. The combination of the speedup techniques makes it
  possible to solve more than ten percent of the instances to optimality.
\end{itemize}

\begin{center}
  \newcommand{\DataRow}[1]{
  \csvreader[head to column names,separator=semicolon]{
    Data/Inequalities/#1.csv}{}{
    \numSolved &
    \remainingGap &
    \runningTime}}

\newcommand{\mcrot}[4]{\multicolumn{#1}{#2}{\rlap{\rotatebox{#3}{#4}~}}}
\newcommand*\rot{\rotatebox{90}}

\begin{table}[htb]

  \caption{Solving statistics for different combinations of speedup techniques, based on 50 different instances consisting of 20 vertices each.}
  \label{table:inequalities}

  \renewcommand{\arraystretch}{1.25}
  \setlength\tabcolsep{4pt}
  \begin{tabular*}{\textwidth}{lrrrrrr}

    \fatline

    & \mcrot{1}{c}{45}{Number of instances solved} & \mcrot{1}{c}{45}{Remaining gap} & \mcrot{1}{c}{45}{Running time} \\

    %& \# instances solved & Remaining gap & Running time \\

    \noalign{\vskip 1mm}
    \hline
    \noalign{\vskip 1mm}

    \hyperref[paragraph:cycle]{Cycle} & \DataRow{cycle} \\
    \hyperref[paragraph:dkp]{$D_k^{+}$} & \DataRow{dk} \\
    \hyperref[paragraph:lsec]{LSEC} & \DataRow{lifted_subtour} \\
    \hyperref[paragraph:odd_cat]{Odd CAT} & \DataRow{odd_cat} \\
    \hyperref[paragraph:odd_path_free]{Odd path-free} & \DataRow{odd_path_free} \\
    SEC & \DataRow{subtour} \\
    \hyperref[paragraph:unitary_afc]{Unitary AFC} & \DataRow{unitary_afc} \\

    \noalign{\vskip 1mm}
    \hline
    \noalign{\vskip 1mm}

    \hyperref[paragraph:lsec]{LSEC} + \hyperref[paragraph:primal_heuristics]{Primal heuristics} & \DataRow{heuristics} \\
    \textbf{\hyperref[paragraph:lsec]{LSEC} + \hyperref[paragraph:primal_heuristics]{Primal heuristics} + \hyperref[paragraph:propagation]{Propagation}} & \DataRow{propagator} \\

    \fatline

  \end{tabular*}
\end{table}

\end{center}

\subsection{Combinations of inequalities}

It is clear from the previous experiments that the addition of SECs / LSECs is the most
effective approach to improve the improve dual bounds. Together with primal heuristics
and objective value propagation it is possible to reduce gaps to the point
of being able to solve a significant part of all instances. We go on to study
the effect of combining SECs / LSECs with other classes of inequalities. To this
end we first separate SECs / LSECs to strengthen the LP-relaxation before applying
separation procedures for different classes on inequalities while employing
both primal heuristics and objective value propagation. The results are depicted
in Table~\ref{table:combined}. Unfortunately, the effects of separating additional
inequalities from the strengthened relaxation has no significant effect on either
the amount of instances solved to optimality or the remaining gap for unsolved instances.

\begin{center}
  \newcommand{\DataRow}[1]{
  \csvreader[head to column names,separator=semicolon]{
    Data/Combined/#1.csv}{}{
    & \numSolved &
    \remainingGap &
    \runningTime}}

\newcommand{\mcrot}[4]{\multicolumn{#1}{#2}{\rlap{\rotatebox{#3}{#4}~}}}
\newcommand*\rot{\rotatebox{90}}

\begin{table}[htb]

  \caption{Solving statistics for different combinations of speedup techniques, based on 50 different instances consisting of 20 vertices each.}
  \label{table:combined}

  \renewcommand{\arraystretch}{1.25}
  \setlength\tabcolsep{4pt}
  \begin{tabular*}{\textwidth}{p{4cm}rrrrrr}

    \fatline

    & & & \mcrot{1}{c}{45}{Number of instances solved} & \mcrot{1}{c}{45}{Remaining gap} & \mcrot{1}{c}{45}{Running time} \\

    %& \# instances solved & Remaining gap & Running time \\

    \noalign{\vskip 1mm}
    \hline
    \noalign{\vskip 1mm}

    \multirow{4}{*}{
      \begin{tabular}{l}
        \hyperref[paragraph:lsec]{LSEC} +\\
        \hyperref[paragraph:primal_heuristics]{Primal heuristics} +\\
        \hyperref[paragraph:propagation]{Propagation}
      \end{tabular}
      }

    & \hyperref[paragraph:dkp]{$D_k^{+}$} & \DataRow{lifted_subtour_dk} \\
    & \hyperref[paragraph:odd_cat]{Odd CAT} & \DataRow{lifted_subtour_odd_cat} \\
    & \hyperref[paragraph:odd_path_free]{Odd path-free} & \DataRow{lifted_subtour_odd_path_free} \\
    & \hyperref[paragraph:unitary_afc]{Unitary AFC} & \DataRow{lifted_subtour_unitary_afc} \\

    \noalign{\vskip 1mm}
    \hline
    \noalign{\vskip 1mm}

    \multirow{4}{*}{
      \begin{tabular}{l}
        \hyperref[paragraph:sec]{SEC} +\\
        \hyperref[paragraph:primal_heuristics]{Primal heuristics} +\\
        \hyperref[paragraph:propagation]{Propagation}
      \end{tabular}
    }

    & \hyperref[paragraph:dkp]{$D_k^{+}$} & \DataRow{subtour_dk} \\
    & \hyperref[paragraph:odd_cat]{Odd CAT} & \DataRow{subtour_odd_cat} \\
    & \hyperref[paragraph:odd_path_free]{Odd path-free} & \DataRow{subtour_odd_path_free} \\
    & \hyperref[paragraph:unitary_afc]{Unitary AFC} & \DataRow{subtour_unitary_afc} \\

    %\hyperref[paragraph:lsec]{LSEC} + \hyperref[paragraph:primal_heuristics]{Primal heuristics} & \DataRow{heuristics} \\
    %\hyperref[paragraph:lsec]{LSEC} + \hyperref[paragraph:primal_heuristics]{Primal heuristics} + \hyperref[paragraph:propagation]{Propagation} & \DataRow{propagator} \\

    \fatline

  \end{tabular*}
\end{table}

\end{center}

\subsection{Learning to branch}
While tailoring the solver lead to significant improvements in the
running times, it is still not possible in a reasonable amount of time to solve 
real world instances to optimality. 

A key problem regarding Branch-and-Bound schemes (and by extension, 
Branch-Price-and-Cut schemes)
is the selection of the branching candidate in the presence of several fractional variables.
To this end, multiple branching rules have been proposed in the literature, 
among these the \emph{strong branching} rule.
Strong branching chooses the ''best'' possible fractional variable with 
respect to a certain score, based on LP-relaxations related to the current 
Branch-and-Bound node (see \cite{StrongBranching} for details).
Strong branching usually yields much smaller Branch-and-Bound
trees. However, the computational costs to evaluate the score of variables is
rather high.
As a result, strong branching is usually only
employed at the root node of the Branch-and-Bound tree.

Khalil et al.~\cite{Khalil2016} suggested to employ machine learning techniques
to learn a branching rule yielding a similar size
of the Branch-and-Bound tree, while avoiding the computational overhead.
The authors used an SVM-based approach to learn a branching rule based
on several generic MIP features, such as fractionality, pseudocost,
and various variable statistics. Labels were assigned based on
strong branching scores. While the results were promising,
the authors were not able to beat the CPLEX-default branching
rule with respect to running time or Branch-and-Bound tree size.

Still, they suggest that the selection and weighing of different
features may be advantageous in order to obtain improved instance-specific
branching models. Furthermore, there has been rapid development regarding
rank learning techniques, mainly driven by web search engine
development (see \cite{LearningToRank} for a summary). As a
result, the SVM-based ranking approach~\cite{SVMRank} has been
superseded by different approaches. Specifically,
the lambdaMART~\cite{Burges2010} algorithm, a boosted tree version of LambdaRank
seems to perform significantly better on web search
related test data.\footnote{A lambdaMART implementation is readily available as part of the
  Quickrank~\cite{Quickrank} \texttt{C++} library.}

In this paper, we use the lambdaMART algorithm to learn a ranking of branching 
candidates depending on some of the features from~\cite{Khalil2016} and some 
features specific to the \tdtsp.  
We generated 20~to~30 training instances. 
For each of these, we collected training data from several branching nodes, 
yielding slightly more than $5000$~data samples to learn the ranking function.
Labels were assigned based on strong branching scores.
The following features were collected for each arc $a=(u,v) \in A$
depending on the value of the current LP-value $Z_{\text{LP}}$ and the cost 
$Z^{*}$ of the best feasible solution available:

\begin{itemize}
\item
  cost relative to the current LP value: $c_{a} / Z_{\text{LP}}$
\item
  cost relative to the best feasible solution: $c_{a} / Z^{*}$
\item
  cost relative to the current gap: $c_{a} / (Z^{*} - Z_{\text{LP}})$
\item distance to one and zero: $x_{uv}$, $1 - x_{uv}$
\item variable slack: $\min(x_{uv}, 1 - x_{uv})$

\item
  number of arcs $(u, v, \theta)$ in $A^{\T}$ divided by $|A^{\T}|$
\item
  number of arcs $(u, v, \theta)$ in $A^{\T}$ which
  have been priced into the current LP
  divided by $|A^{\T}|$
\item
  pseudocost of arc $x_{uv}$
\item
  the (four) sizes relative to $|V|$ of the connected components
  containing $u$ or $v$ of the subgraph containing only the arcs 
  branched to one or zero
\end{itemize}
We trained two different ranking functions; one based on small
instances, each containing $|V| = 10$ vertices, one based
large instances, each containing $|V| = 20$ vertices.
We compared the resulting branching rules with
the one built into SCIP. To this end, we generated
20~small, respectively large, random instances
different from the training instances and
compared the branching rules with respect
to running time and remaining gap.
The results can be found in Table~\ref{table:learning}.

Unfortunately, these first tests were not successful, since the
running times got worse.

\begin{center}
  \newcommand{\DataRow}[1]{
  \csvreader[head to column names,separator=semicolon]{
    Data/Learning/#1.csv}{}{
    & \numSolved &
    \remainingGap &
    \runningTime}}

\newcommand{\mcrot}[4]{\multicolumn{#1}{#2}{\rlap{\rotatebox{#3}{#4}~}}}
\newcommand*\rot{\rotatebox{90}}

\begin{table}[htb]

  \caption{Solving statistics for different branching rules,
    averaged over twenty small and large instances, respectively}
  \label{table:learning}

  \renewcommand{\arraystretch}{1.25}
  \setlength\tabcolsep{4pt}
  \begin{tabular*}{\textwidth}{lp{4cm}rrrrr}

    \fatline

    & & & \mcrot{1}{c}{45}{Number of instances solved} & \mcrot{1}{c}{45}{Remaining gap} & \mcrot{1}{c}{45}{Running time} \\

    %& \# instances solved & Remaining gap & Running time \\

    \noalign{\vskip 1mm}
    \hline
    \noalign{\vskip 1mm}

    \multirow{3}{*}{
      \begin{tabular}{l}
        Small instances
      \end{tabular}
    }

    & MC-branching, trained on large instances & \DataRow{ranking_branching_large_training_small_instance} \\
    & MC-branching, trained on small instances & \DataRow{ranking_branching_small_training_small_instance} \\
    & Built-in branching & \DataRow{sparse_simple_branching_small_instance} \\

    \noalign{\vskip 1mm}
    \hline
    \noalign{\vskip 1mm}

    \multirow{3}{*}{
      \begin{tabular}{l}
        Large instances
      \end{tabular}
    }

    & MC-branching, trained on large instances & \DataRow{ranking_branching_large_training_large_instance} \\
    & MC-branching, trained on small instances & \DataRow{ranking_branching_small_training_large_instance} \\
    & Built-in branching & \DataRow{sparse_simple_branching_large_instance} \\

    %& \hyperref[paragraph:dkp]{$D_k^{+}$} & \DataRow{lifted_subtour_dk} \\
    %& \hyperref[paragraph:odd_cat]{Odd CAT} & \DataRow{lifted_subtour_odd_cat} \\
    %& \hyperref[paragraph:odd_path_free]{Odd path-free} & \DataRow{lifted_subtour_odd_path_free} \\
    %& \hyperref[paragraph:unitary_afc]{Unitary AFC} & \DataRow{lifted_subtour_unitary_afc} \\

    %% \noalign{\vskip 1mm}
    %% \hline
    %% \noalign{\vskip 1mm}

    %% \multirow{4}{*}{
    %%   \begin{tabular}{l}
    %%     \hyperref[paragraph:sec]{SEC} +\\
    %%     \hyperref[paragraph:primal_heuristics]{Primal heuristics} +\\
    %%     \hyperref[paragraph:propagation]{Propagation}
    %%   \end{tabular}
    %% }

    %% & \hyperref[paragraph:dkp]{$D_k^{+}$} & \DataRow{subtour_dk} \\
    %% & \hyperref[paragraph:odd_cat]{Odd CAT} & \DataRow{subtour_odd_cat} \\
    %% & \hyperref[paragraph:odd_path_free]{Odd path-free} & \DataRow{subtour_odd_path_free} \\
    %% & \hyperref[paragraph:unitary_afc]{Unitary AFC} & \DataRow{subtour_unitary_afc} \\

    %\hyperref[paragraph:lsec]{LSEC} + \hyperref[paragraph:primal_heuristics]{Primal heuristics} & \DataRow{heuristics} \\
    %\hyperref[paragraph:lsec]{LSEC} + \hyperref[paragraph:primal_heuristics]{Primal heuristics} + \hyperref[paragraph:propagation]{Propagation} & \DataRow{propagator} \\

    \fatline

  \end{tabular*}
\end{table}

\end{center}

\section{Conclusion}
\label{section:conclusion}

In this paper we have discussed several theoretical and empirical properties
of the \tdtsp. Since the \tdtsp is a generalization of the ATSP, many of
the complexity-specific theoretical results, such as $\NP$-hardness and
inapproximability, carry over to the \tdtsp.

Unfortunately, several positive results regarding the ATSP are not retained in the \tdtsp.
Specifically, the \tdtsp remains inapproximable even if a generalized
triangle inequality is satisfied. Furthermore, even simple
relaxations, such as time-dependent trees cannot be used to
determine combinatorial lower bounds on the \tdtsp.

From a practitioner's point of view, the increase in problem size poses significant
problems when trying to solve even moderate-sized instances of the \tdtsp.
The authors of \cite{TimeDependentTSPPoly} conclude that there are challenging
instance of the STDTSP with less than one hundred vertices. While the results
date back some years, the increase in computational complexity is apparent
even in the case of the STDTSP.

To be able to tackle the \tdtsp, a sophisticated pricing routine is absolutely
necessary. The path-based structure of the formulation is helpful in devising
a pricing routine which employs the technique of Lagrangean relaxations.
The connection between \tdtsp and ATSP yields a variety of feasible classes
of inequalities which help to significantly improve dual bounds. Unfortunately,
the generalizations of STDTSP-type inequalities do not perform equally well in comparison.
Objective value propagation and primal heuristics decrease the gap even further. The
primal heuristics profit from the connection to the ATSP, significantly outperforming
the heuristics built into the solver itself.

Similar to the results in~\cite{Khalil2016}, the learned branching rule 
does not surpass the conventional methods. Still, better features, 
a reinforcement learning approach, or learning parts of a solution directly 
might change the picture.

\bibliographystyle{abbrv}
\bibliography{tdtsp}

\end{document}